\documentclass[preprint,12pt]{elsarticle}
\usepackage[utf8]{inputenc}
\usepackage{lmodern,microtype,amsmath,amssymb,amsthm,mathtools,booktabs,array}
\usepackage[hidelinks]{hyperref}
\biboptions{numbers,sort&compress}
\journal{Journal of Number Theory}
\hypersetup{pdftitle={Bounded continued fractions and quadratic orders in Q(sqrt(127))},
 pdfsubject={Research manuscript prepared for the Journal of Number Theory},
 pdfkeywords={continued fractions, quadratic orders, bounded partial quotients}}
\newtheorem{theorem}{Theorem}[section]

\newtheorem{lemma}[theorem]{Lemma}
\newtheorem{corollary}[theorem]{Corollary}
\newtheorem{conjecture}[theorem]{Conjecture}
\theoremstyle{definition}

\newtheorem{example}[theorem]{Example}
\theoremstyle{remark}
\newtheorem{remark}[theorem]{Remark}
\newcommand{\Z}{\mathbb Z}
\newcommand{\Q}{\mathbb Q}
\newcommand{\F}{\mathbb F}
\newcommand{\tr}{\operatorname{tr}}
\newcommand{\rev}{\operatorname{rev}}
\newcommand{\per}[1]{[\overline{#1}]}
\newcommand{\OO}{\mathcal O}
\newcommand{\BB}{\mathcal B}
\newcommand{\eps}{\varepsilon}
\newcommand{\mat}[4]{\begin{pmatrix}#1&#2\\#3&#4\end{pmatrix}}
\numberwithin{equation}{section}

\begin{document}
\nocite{BGS10,BGS11,BK14,BK17,BK18,BV12,Buell89,Duke88}
\nocite{ELMV09,FineWilf65,FK14,GHR25,Good41,Helfgott08,Hensley89,Hensley92}
\nocite{Hensley96,Huang15,JP01,JP18,Kan17,Kont13,Kont16,Kont22}
\nocite{KL26,Lothaire02,MOW19,Malagoli17,McM09,McM12,Mercat13,MMS26}
\nocite{Niederreiter86,RS25,RME25,Sarnak82,Series85,Shk26,Shulga23,WD24}
\nocite{Wilson80,Woods78,Zaremba72,Zhang26}

\begin{frontmatter}
\title{Bounded continued fractions and quadratic orders in $\Q(\sqrt{127})$}
\author[ajman]{Shirali Kadyrov\corref{cor1}}
\ead{s.kadyrov@ajman.ac.ae}

\author[sdu]{Aiken Kazin}
\ead{aiken.kazin@sdu.edu.kz}

\author[sdu]{Alibek Orynbassar}
\ead{alibek.orynbassar@sdu.edu.kz}

\author[asu]{Nurdaulet Shynarbek}
\ead{n.shynarbek@innotech.edu.kz}

\cortext[cor1]{Corresponding author}

\affiliation[ajman]{
    organization={Ajman University},
    city={Ajman},
    country={United Arab Emirates}
}

\affiliation[sdu]{
    organization={SDU University},
    city={Kaskelen},
    country={Kazakhstan}
}

\affiliation[asu]{
    organization={Innotech powered by Arizona State University},
    city={Almaty},
    country={Kazakhstan}
}
\begin{abstract}
We study the least periods and multiplier orders in two constructions of
Mercat for continued fractions with bounded partial quotients. Our main
explicit result concerns a family in $\Q(\sqrt{127})$ with partial quotients
at most $4$. For every integer $j\geq1$, we prove that its primitive period
has length $92j+2$ and its multiplier order has conductor $y_{4j}/4$, where
$(4730624+419775\sqrt{127})^k=x_k+y_k\sqrt{127}$. The totally positive unit
group of this order has index $4j$ in that of the maximal order. These exact
formulas give divisor-sum lower bounds at fixed traces and a lower bound of
order $(\log X)^2$ for the number of bounded continued fractions with
discriminant at most $X$. We also prove a primitive-period refinement of
Mercat's palindromic construction. Combining this refinement with Hensley's
counting theorem gives, for every $m\geq2$ and $\eta>0$, at least
$X^{\delta_m/2-\eta}$ quadratic fields, up to a positive constant, that admit
infinitely many purely periodic continued fractions with partial quotients
at most $m$. Here $\delta_m$ is the Hausdorff dimension of the corresponding
continued fraction Cantor set.
\end{abstract}
\begin{keyword}
Periodic continued fractions\sep Bounded partial quotients\sep Quadratic orders\sep Fundamental units
\MSC[2020]{11J70\sep 11R11\sep 11R27\sep 68R15}
\end{keyword}
\end{frontmatter}

\section{Introduction}\label{sec:intro}
A real irrational number has an eventually periodic regular continued
fraction if and only if it is quadratic. We write
$\per{a_1,\ldots,a_r}$ for a purely periodic continued fraction. For a real
quadratic field $K$ and an integer $m\geq2$, let
\[
 \BB_m(K)=\{\per{w}\in K:\ w\text{ is a nonempty word on }\{1,\ldots,m\}\}.
\]
Elements of this set are counted as real numbers. Repeating a period does
not give a new element. McMullen asked the following question
\cite{McM09,McM12,Mercat13}.
\begin{conjecture}\label{conj:binary}
For every real quadratic field $K$, the set $\BB_2(K)$ is infinite.
\end{conjecture}
McMullen proved that a bound depending on $K$ is sufficient. Mercat
\cite{Mercat13} gave explicit constructions and proved that
$\BB_2(K)$ is infinite for infinitely many fields. A weaker uniform question
asks for one absolute $m$ that works for every real quadratic field.

In this paper we study the least periods and quadratic orders in two of
Mercat's constructions. Our main explicit result concerns
$K=\Q(\sqrt{127})$ and partial quotients at most $4$. We determine the
conductors and the indices of the totally positive unit groups for every
member of the family. This gives lower bounds for the number of bounded
continued fractions at fixed traces and below a discriminant bound. We also
give a quantitative count of fields in which Mercat's palindromic construction
proves infinitude. These results do not settle Conjecture~\ref{conj:binary}
for an arbitrary prescribed field.

\subsection{An explicit family in \texorpdfstring{$\Q(\sqrt{127})$}{Q(sqrt(127))}}
The choice of $127$ has a historical reason. Mercat
\cite[Remarks 1.7 and 8.4]{Mercat13} reports a computation giving a purely
periodic continued fraction with partial quotients $1,2$ for every
squarefree $1<d<127$. He also gives a binary period of length $78$ in
$\Q(\sqrt{19})$. Thus $127$ is the next squarefree radicand after the range
covered by that computation.

The field $\Q(\sqrt{127})$ has maximal order $\Z[\sqrt{127}]$, discriminant
$508$, and fundamental unit
\begin{equation}\label{eq:epsilon127}
 \eps=4730624+419775\sqrt{127},\qquad N(\eps)=1.
\end{equation}
This follows from the standard continued fraction calculation
\[
 \sqrt{127}=[11;\overline{3,1,2,2,7,11,7,2,2,1,3,22}].
\]
Write $\eps^k=x_k+y_k\sqrt{127}$. The field has no unit of norm $-1$,
since $x^2-127y^2=-1$ is impossible modulo $127$.

We recall two standard terms used in the statements. A finite word is
\emph{primitive} if it is not a proper power of a shorter word. For a
quadratic irrational $\alpha\in K$, its multiplier order is
\[
 \OO(\alpha)=\{\beta\in K:\beta(\Z+\Z\alpha)\subseteq\Z+\Z\alpha\}.
\]
We write $\OO_f=\Z+f\OO_K$ for the order of conductor $f$, and
$\Delta(\alpha)$ for the discriminant of the primitive quadratic polynomial
of $\alpha$. The group of totally positive units of an order $\OO$ is denoted
by $\OO^{\times,+}$.

Define the following $45$-digit palindrome:
\begin{equation}\label{eq:Pword}
 P=\mathtt{213112111111131221312242213122131111111211312}.
\end{equation}
It starts and ends in $2$ and contains exactly one $4$. Write $P=2M2$, so
$M$ is a palindrome of length $43$, and set
\begin{equation}\label{eq:127words}
 \begin{gathered}
 a=P3,\qquad b=1131,\qquad c=2M31M2,\\
 z_j=b\,a^{j-1}c\,\rev(a)^{j-1}\quad(j\geq1).
 \end{gathered}
\end{equation}
This is Mercat's Proposition 7.1 with parameters $i=3$, $j=2$ there; the
index $j$ in \eqref{eq:127words} parametrizes our family.

\begin{theorem}\label{thm:127}
Let $K=\Q(\sqrt{127})$. For every integer $j\geq1$:
\begin{enumerate}
\item $\alpha_j=\per{z_j}$ belongs to $\Q(\sqrt{127})$ and uses only
the digits $1,2,3,4$;
\item its primitive period has length $92j+2$;
\item its multiplier order is $\OO_{f_j}$, where
\begin{equation}\label{eq:fj}
 f_j=\frac{y_{4j}}4
     =\frac{\eps^{4j}-\eps^{-4j}}{8\sqrt{127}},
 \qquad
 \Delta(\alpha_j)=\frac{(\eps^{4j}-\eps^{-4j})^2}{16};
\end{equation}
\item the index $[\OO_K^{\times,+}:\OO_{f_j}^{\times,+}]$ is $4j$, and
\begin{equation}\label{eq:indexfour}
 [\OO_{f_j}:\Z[\eps^{4j}]]=4.
\end{equation}
\end{enumerate}
In particular the family consists of infinitely many distinct periods, up to cyclic rotation, and supplies infinitely many elements of $\BB_4(\Q(\sqrt{127}))$.
\end{theorem}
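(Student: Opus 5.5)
The strategy is to reduce every assertion to a computation with the $2\times2$ matrix of the period. For a word $w=w_1\cdots w_r$ put
\[
 M(w)=\prod_{i=1}^r\mat{w_i}{1}{1}{0}=\mat{p}{p'}{q}{q'} .
\]
Then $\per{w}$ is the attracting fixed point of $M(w)$. It is a root of $qx^2+(q'-p)x-p'=0$, so $\Delta=\tr M(w)^2-4(-1)^r$. Its multiplier order has discriminant $\Delta/g^2$ with $g=\gcd(q,q'-p,p')$. The proof splits into a number-theoretic half (parts (1), (3), (4)) and a combinatorial half (part (2)).

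\textbf{Step 1: the matrix of $z_j$.} Write $A=M(a)$ and use $M(\rev(w))=M(w)^{T}$. Then
\[
 M(z_j)=M(b)\,A^{j-1}\,M(c)\,(A^{T})^{j-1}.
\]
Since $P$ and $M$ are palindromes, $M(P)$ and $M(M)$ are symmetric. I would compute $A$, $M(b)$ and $M(c)$ explicitly; these are finite computations. Mercat's Proposition~7.1 should guarantee that $A$ is a hyperbolic matrix whose eigenvalues generate a power of $\eps$ (presumably $\eps^4$ up to sign and norm). I would check that the characteristic polynomial of $A$ has roots in $\Q(\sqrt{127})$ with $\tr A=\pm(\eps^{2}+\eps^{-2})$ or similar. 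I would also check that $M(c)$ and $M(b)$ are arranged so that the cyclic conjugate $M(c)(A^T)^{j-1}M(b)A^{j-1}$ has trace $\eps^{4j}+\eps^{-4j}$ times a fixed rational factor.

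The cleanest route is to express $A^{k}=U_k A-U_{k-1}I$ through the Chebyshev/Lucas sequence of $\tr A$. This gives $\tr M(z_j)$ as a linear combination of $x_{4j}$ and $1$, and it yields
\[
 \Delta=\tr^2-4=(\eps^{4j}-\eps^{-4j})^2
\]
before dividing by $g^2$. Part (1) then follows, since $\Delta$ is $127$ times a square.

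\textbf{Step 2: the gcd and the conductor (parts (3), (4)).} We need $g=2$ for every $j$, which gives
\[
 \Delta(\alpha_j)=(\eps^{4j}-\eps^{-4j})^2/16=508\,(y_{4j}/4)^2 .
\]
Since $\mathrm{disc}\,\OO_K=508$, this gives $f_j=y_{4j}/4$. Three things have to be checked:
\begin{itemize}
 \item[(i)] $4\mid y_{4j}$. This should be visible from $y_{2k}=2x_ky_k$ together with a parity check on $y_2$.
 \item[(ii)] $2\mid g$ for all $j$. This can be checked by reducing the entries of $M(z_j)$ modulo $2$ and modulo $4$, using the periodicity of $A^k$ modulo small integers.
 \item[(iii)] No odd prime divides $g$. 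Here I would use that $g$ divides the off-diagonal entries of $M(z_j)-\tfrac{\tr}{2}I$ and relate these to $y_{4j}$ times fixed polynomial expressions; a resultant-type argument then shows the common factor is bounded and equals $2$.
\end{itemize}
For part (4), $\OO_{f}^{\times,+}=\langle\eps^{t}\rangle$, where $t$ is least with $f\mid y_t$. We have $f_j\mid y_{4j}$, and $y_k\mid y_n$ iff $k\mid n$ (divisibility sequence). So it remains to exclude $t$ a proper divisor of $4j$. This follows from the growth bound $y_{t}<y_{4j}/4$ for $t\le 2j$, together with the strong divisibility $\gcd(y_t,y_{4j})=y_{\gcd(t,4j)}$. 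Hence the index is $4j$. Finally,
\[
 [\OO_{f_j}:\Z[\eps^{4j}]]=\frac{y_{4j}}{f_j}=4 .
\]

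\textbf{Step 3: primitivity (part (2)).} The length of $z_j$ is $|b|+2(j-1)|a|+|c|=4+92(j-1)+90=92j+2$, using $|a|=46$ and $|c|=90$. It remains to show $z_j$ is primitive. One argument is that a proper power $u^e$ would have $\tr M(z_j)=\tr M(u)^e$, and $M(u)$ would be a unit of the same order $\OO_{f_j}$. Its eigenvalue $\eps^{4j/e}$ then lies in $\OO_{f_j}$, contradicting part (4). A purely combinatorial alternative is also available: $z_j$ contains the factor $4$ exactly $2j-1$ times (once in each copy of $a$ and $\rev(a)$, twice in $c$, once fewer overall since $b$ has none). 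An $e$-th power would force $e\mid 2j-1$ and comparable positioning, which one can refute via the positions of $b=1131$. I prefer the unit argument.

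The main obstacle is Step 2(iii), showing that the gcd is exactly $2$ uniformly in $j$. I would try first to write all entries of $M(z_j)$ as $\alpha U_{2j-2}^2+\beta U_{2j-2}U_{2j-3}+\gamma U_{2j-3}^2$ with explicit integers, and control common divisors using $\gcd(U_k,U_{k-1})=1$.
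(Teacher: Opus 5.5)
There is a genuine gap in Step 2, which carries parts (3) and (4). First, the value you aim for is wrong: $g(M(z_j))=4$, not $2$. Since $\tr M(z_j)^2-4=(\eps^{4j}-\eps^{-4j})^2=508\,y_{4j}^2$, the value $g=2$ would give conductor $y_{4j}/2$, contradicting the statement and your own division by $16$. Your mod-$4$ check would expose this. We have $A\equiv\mat{3}{0}{3}{3}\pmod 4$, so $w:=A^{j-1}Ae_2=(h,k)^t$ has $4\mid h$ and $k$ odd. Then $M(z_j)_{12}=9hk+7k^2+9$, $M(z_j)_{21}=5h^2+4hk-4$ and $M(z_j)_{11}-M(z_j)_{22}=9h^2+2hk-4k^2-12$ are all divisible by $4$. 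Moreover, ``$2\mid g$ and no odd prime divides $g$'' never determines the exact power of $2$ in $g$. You also need an upper bound, e.g.\ $M(z_j)_{21}\equiv-4\pmod{16}$.

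Second, and more seriously, the odd part of $g$ is only gestured at, and it is the real content. The entries are not ``$y_{4j}$ times fixed polynomials'', and the exclusion must be uniform in $j$ while $y_{4j}$ acquires ever more prime factors. A resultant bound at coprime Chebyshev pairs can at best reduce the candidates to the primes of one fixed integer. You must still exclude each of those for every $j$, and your plan has no mechanism for that.

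The paper's mechanism is as follows. With $J=\mat{0}{-1}{1}{0}$, $v=(3,2)^t$ and $u=Ae_2$, there are rank-one identities $M(b)=vv^t-J$ and $M(c)=uu^t-J$, so $M(z_j)=(vv^t-J)(ww^t-J)$. If an odd $p$ divides $g$, then $M(z_j)\equiv-I$ and $ww^t\equiv-zz^t\pmod p$ with $z=(2,-3)^t$. Hence $k^2\equiv-9$ and $\tr A^j=v^tw\equiv0\pmod p$. The $A$-invariant form $Q_0=(JA-A^tJ)/4930$ makes $w^tQ_0w=u^tQ_0u$ independent of $j$, which forces $p\mid H=-2^2\cdot7^2\cdot13\cdot43\cdot557\cdot2053$. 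The primes $7$ and $43$ fail because $-1$ is a nonresidue there. The primes $13$, $557$, $2053$ fail because $A^{3}$, $A^{139}$, $A^{513}$ are $\equiv-I$ modulo them respectively, so no power of $A$ has order $4$; yet $\tr A^j\equiv0$ would give $(A^j)^2\equiv-I$.

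The same identities also settle your Step 1. They give $\tr M(z_j)=(v^tw)^2-2=\tr A^{2j}$ exactly, with no rational factor, and the expanding eigenvalue of $A$ is $\eps^2$, not $\eps^4$. Your Chebyshev expansion only produces a quadratic form in $(U_{j-1},U_{j-2})$ that you would still have to identify with $\tr A^{2j}$.

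Your part (4) argument and the unit-theoretic primitivity argument match the paper's. Your combinatorial count is off, though: $z_j$ contains $2j$ fours, not $2j-1$.
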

The word construction is Mercat's Proposition 7.1. The additional arithmetic
step is an exact greatest common divisor calculation for its quadratic
coefficients. This calculation determines the order and also proves that
the displayed period is the least period. The proof is given in
Section~\ref{sec:127}.

\subsection{Counting bounded continued fractions}\label{subsec:counts}
For a real quadratic field $K$, let $\eps_K>1$ generate
$\OO_K^{\times,+}$ and put $T_k=\eps_K^k+\eps_K^{-k}$.
For an order $\OO_f$, write
\[
 k(f)=[\OO_K^{\times,+}:\OO_f^{\times,+}].
\]
Let $A_{K,m}(k)$ count the elements of $\BB_m(K)$ whose multiplier orders
have $k(f)=k$. Let $R_{K,m}(T_k)$ count those whose multiplier orders contain
$\eps_K^k$. In the case $K=\Q(\sqrt{127})$ we use the subscript $127$.
Also let $N_{127,4}(X)$ count the elements of
$\BB_4(\Q(\sqrt{127}))$ with $\Delta(\alpha)\leq X$.
Write $\sigma_1(r)=\sum_{d\mid r}d$ and $\tau(r)=\sum_{d\mid r}1$.
\begin{corollary}\label{cor:counts127}
For every $r\geq1$,
\begin{equation}\label{eq:127counts}
 A_{127,4}(4r)\geq92r+2,\qquad
 R_{127,4}(T_{4r})\geq92\sigma_1(r)+2\tau(r).
\end{equation}
If
\begin{equation}\label{eq:JX}
 J(X)=\left\lfloor\frac{\operatorname{arsinh}(2\sqrt X)}{4\log\eps}\right\rfloor,
\end{equation}
then
\begin{equation}\label{eq:cumulative}
 N_{127,4}(X)\geq46J(X)^2+48J(X).
\end{equation}
In particular,
\begin{equation}\label{eq:liminf}
 \liminf_{X\longrightarrow\infty}
 \frac{N_{127,4}(X)}{(\log X)^2}
 \geq\frac{23}{32(\log\eps)^2}.
\end{equation}
\end{corollary}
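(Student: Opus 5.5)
Everything follows from Theorem~\ref{thm:127}, once we observe that cyclic rotations of a primitive period give distinct purely periodic numbers with the same multiplier order. Fix $r\geq1$ and take $j=r$. By Theorem~\ref{thm:127}, $\alpha_r=\per{z_r}$ has primitive period of length $92r+2$. Its cyclic rotations give $92r+2$ purely periodic numbers $\per{\sigma^i z_r}$, $0\leq i<92r+2$, all with partial quotients in $\{1,2,3,4\}$. They are pairwise distinct as real numbers: purely periodic expansions are unique, and a primitive word has $92r+2$ distinct rotations. Each rotation equals $\alpha_r$ transformed by the element of $\mathrm{GL}_2(\Z)$ attached to the prefix removed. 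Hence the lattice $\Z+\Z\per{\sigma^i z_r}$ is homothetic to $\Z+\Z\alpha_r$, and all rotations share the multiplier order $\OO_{f_r}$. By part~4, $k(f_r)=4r$, so these numbers are counted in $A_{127,4}(4r)$. This gives the first inequality.

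For $R_{127,4}(T_{4r})$, note that $\eps^{4r}\in\OO_{f_j}$ exactly when $\OO_{f_j}^{\times,+}=\langle\eps^{4j}\rangle$ contains $\eps^{4r}$, that is, when $4j\mid 4r$, i.e.\ $j\mid r$. The multiplier order of an element of $\BB_4$ is determined, so elements counted for different $j$ are distinct: different $j$ give different $k(f_j)=4j$. Summing the previous count over $j\mid r$ gives $\sum_{j\mid r}(92j+2)=92\sigma_1(r)+2\tau(r)$.

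For the cumulative bound, we need the $j$ with $\Delta(\alpha_j)\leq X$. By \eqref{eq:fj}, $\Delta(\alpha_j)=\bigl(2\sinh(4j\log\eps)\bigr)^2/16=\sinh^2(4j\log\eps)/4$. The condition $\Delta\leq X$ is therefore $\sinh(4j\log\eps)\leq2\sqrt X$, i.e.\ $j\leq J(X)$. Rotations have the same discriminant, since $\Delta$ depends only on the order. The sets for distinct $j$ are disjoint. So
\[
N_{127,4}(X)\geq\sum_{j=1}^{J(X)}(92j+2)=46J(X)(J(X)+1)+2J(X)=46J(X)^2+48J(X).
\]
Finally, $\operatorname{arsinh}(2\sqrt X)=\tfrac12\log X+O(1)$, so $J(X)\sim\log X/(8\log\eps)$. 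Then $46J(X)^2\sim 46(\log X)^2/(64(\log\eps)^2)=23(\log X)^2/(32(\log\eps)^2)$, which gives \eqref{eq:liminf}.

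The only step needing care is the claim that all rotations have the same multiplier order. This is the standard $\mathrm{GL}_2(\Z)$-equivalence of $\alpha$ with its complete quotients, so I expect no real obstacle. Everything else is bookkeeping on Theorem~\ref{thm:127}.
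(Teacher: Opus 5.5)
Your proof is correct and follows the paper's argument essentially step by step. The $92r+2$ rotations of the primitive period $z_r$ give distinct roots that share the order $\OO_{f_r}$, summing over $j\mid r$ gives the $R$ bound, and rewriting $\Delta(\alpha_j)=\tfrac14\sinh^2(4j\log\eps)$, together with disjointness across $j$, gives the cumulative and $\liminf$ bounds. The only cosmetic differences are two: you justify invariance of the order by homothety of lattices rather than by conjugation preserving $g(M)$ and the trace, and you obtain the second inequality directly from $\OO_{f_j}^{\times,+}=\langle\eps^{4j}\rangle$ instead of citing \eqref{eq:divisoridentity}.
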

The proof uses the distinct cyclic rotations of each primitive period.
The conductors are strictly increasing, so different parameters give
different sets of roots. These bounds are proved at the end of
Section~\ref{sec:127}.

\subsection{Counting quadratic fields}
Let
\[
 E_m=\{[0;a_1,a_2,\ldots]:1\leq a_i\leq m\},\qquad
 \delta_m=\dim_H(E_m).
\]
Define
\[
 G_m(X)=\#\left\{\begin{array}{ll}
 1<d\leq X:\!&d\text{ squarefree},\quad d\equiv3\pmod4,\\
 &|\BB_m(\Q(\sqrt d))|=\infty
 \end{array}\right\}.
\]
\begin{theorem}\label{thm:fields}
For every fixed $m\geq2$ and $\eta>0$,
\begin{equation}\label{eq:fieldcount}
 G_m(X)\gg_{m,\eta}X^{\delta_m/2-\eta}
 \qquad(X\longrightarrow\infty).
\end{equation}
For the binary alphabet the exponent $\delta_2/2$ is
$0.265640253138602570812\ldots$.
\end{theorem}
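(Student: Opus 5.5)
The plan is to combine a primitive-period refinement of Mercat's palindromic construction with Hensley's count of rationals with bounded partial quotients. Mercat's criterion, in the form we will use, says: if a palindrome $w$ on $\{1,\ldots,m\}$ has continuant matrix $\mat{p}{p'}{q}{q'}$ with $p'=q$, then suitable combinations of $w$ with short fixed words give infinitely many periods in a single field. That field is determined by a quadratic expression in the entries, of the form $d$ equal to the squarefree part of $D(w)=(p+q')^2\pm4$ or a similar fixed quadratic form in the continuants. We will first prove the refinement: for each such $w$, the words $u\,w^{k}\,v$ (or Mercat's analogous family) are primitive, and they produce pairwise distinct roots in $\Q(\sqrt{d(w)})$. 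Primitivity will follow from a Fine--Wilf type argument: a proper period of $u w^k v$ would force $w$ to have a period incompatible with its fixed border letters. This shows that $|\BB_m(\Q(\sqrt{d(w)}))|=\infty$. We also impose a congruence on the letters of $w$, by fixing a prefix and suffix, so that the squarefree part satisfies $d\equiv3\pmod4$.

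For the counting, we apply Hensley's theorem. The number of reduced fractions $p/q\in E_m$ with $q\le Q$ is $\asymp_m Q^{2\delta_m}$. Palindromes of length $n$ on $\{1,\ldots,m\}$ are determined by their first half. The palindromic words whose continuant has denominator at most $Q$ therefore correspond to words of half-length with denominator about $Q^{1/2}$. Since continuants are submultiplicative up to a factor $2$, there are $\gg_{m,\eta} Q^{\delta_m-\eta}$ such palindromes. Here the $\eta$ absorbs the fixed-prefix constraint and the losses from Hensley's asymptotic restricted to a congruence class; if only the crude exponent is available, we would use Hensley's $Q^{2\delta_m}$ up to $Q^{o(1)}$. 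Each palindrome gives a radicand $D(w)\asymp q^2\le Q^2$. Taking $Q=X^{1/2}$ gives $\gg X^{\delta_m/2-\eta}$ palindromes with $D(w)\le X$, and hence with $d(w)\le X$.

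The main obstacle is the multiplicity: many palindromes may give the same squarefree $d$. We will control this by bounding, for fixed $d\le X$, the number of $w$ with $D(w)=d s^2\le X$. Each such $w$ gives a solution of a Pell-type equation $t^2-ds^2=\pm4$ with $t\le 2\sqrt X$. For each $d$ there are $O(\log X)$ such solutions, since they are powers of a fundamental unit. Also, $w$ is determined by $t$ up to $X^{o(1)}$ choices, because the palindrome is recovered from its continuant matrix via the continued fraction expansion of $p/q$, and there are $\tau$-many factorizations. So each $d$ is counted at most $X^{\eta/2}$ times, and the distinct fields number $\gg X^{\delta_m/2-\eta}$.

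Finally, the numerical value of $\delta_2/2$ comes from the known high-precision value $\delta_2=0.531280506277205141624\ldots$ of Jenkinson--Pollicott, halved.
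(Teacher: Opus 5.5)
Your counting skeleton matches the paper's: Hensley's count on half-words, palindromes built from them, a divisor bound for palindromes sharing one trace, and $O(\log X)$ traces per field (uniformly, since $\eps_K\geq(3+\sqrt5)/2$). But two load-bearing steps are misidentified or missing. The first is the quantity that determines the field. Words $u\,w^k\,v$ do not in general stay in one quadratic field: for example $1\,2^k$ gives $\Q(\sqrt3)$ for $k=1$ and $\Q(\sqrt{85})$ for $k=2$. Also, the relevant trace is not that of the palindrome matrix $S=P_s$. In the family $b\,a^nc\,\rev(a)^n$ of Theorem~\ref{thm:pal}, with $a=s11211$, the field is $\Q(\sqrt{T^2-4\det A})$, where $A=SR$ and $R=P_{11211}$. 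Hence $T=\tr(SR)=12S_{11}+14S_{12}+4S_{22}$. Your collision step (``$w$ is determined by $t$ up to $X^{o(1)}$ choices \ldots\ $\tau$-many factorizations'') must therefore bound the symmetric $S$ with $\det S=-1$ and $\tr(SR)=T$. You give no argument for this, and it is the crux of the multiplicity estimate. The paper's identity supplies it. Put $U=T-3S_{11}-2S_{12}$ and $V_1=T+4S_{11}+2S_{12}$. Then $UV_1=T^2-4$, and $U$ recovers $S$ linearly, so at most $\tau(T^2-4)$ matrices share a trace.

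The second gap is the congruence $d\equiv3\pmod4$, for which your mechanism does not work. Fixing a prefix and suffix does not control $S\bmod2$. Indeed $P_{x\,y\,\rev(x)}=P_xP_yP_x^t$, and $P_y\bmod2$ still ranges over all four invertible symmetric residues. The determinant sign also matters, and you do not address it. For an even-length palindrome one has $\det A=-1$, so the radicand is $T^2+4$, whose squarefree part is never $3\bmod4$. What works is to take odd palindromes $s=v1\rev(v)$, so that $\det A=1$, and to force $S\equiv I\pmod2$. Then $4\mid T$ and $(T/2)^2-1\equiv3\pmod4$. Its squarefree part $d$ is therefore $3\bmod4$, with $d<T^2/4$. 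The paper gets $S\equiv I$ by appending to each half-word a suffix from $\varnothing,1,11,2,12,21$, chosen according to its residue, so that $P_v\equiv\mat{1}{0}{1}{1}\pmod2$. This map is at most six-to-one, so no congruence-class version of Hensley's theorem is needed.
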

The qualitative infinitude of these fields follows from Mercat's
construction. For the quantitative bound, we combine Hensley's matrix count
with estimates for the number of constructions having a given trace or a
given quadratic field. The proof is given in Section~\ref{sec:fields}.

\subsection{Related work}
Woods and Wilson \cite{Woods78,Wilson80} obtained earlier results in fixed
quadratic fields through approximation spectra. McMullen \cite{McM09}
developed the dynamical and arithmetic form of the problem. His stronger
arithmetic-chaos question \cite{McM12} concerns the abundance of bounded
periods in each field. Mercat's explicit constructions \cite{Mercat13}
are the starting point of this paper. Malagoli \cite[\S3.2]{Malagoli17}
reviews the real-number questions and develops analogues in function fields.

The relation between continued fractions and the modular geodesic flow is
described by Series \cite{Series85}. Quadratic forms and their unit groups
provide the arithmetic description; see Buell \cite{Buell89} and Sarnak
\cite{Sarnak82}. Duke \cite{Duke88} and
Einsiedler--Lindenstrauss--Michel--Venkatesh \cite{ELMV09} study the
distribution of arithmetic geodesics and periodic torus orbits.
Bourgain--Kontorovich \cite{BK17} prove the existence of infinitely many
low-lying fundamental geodesics. In our explicit family, the field is fixed
and the order discriminants are $508f_j^2$. Recent work of
Rieutord--Moreira--Erazo \cite{RME25} also uses periodic words to study the
Markov and Lagrange spectra. Wang--Deng \cite{WD24} study continued
fractions whose partial quotients lie in a quadratic ring; in the present
paper the partial quotients are positive rational integers.

Kontorovich's surveys \cite{Kont13,Kont16} explain the methods for thin
semigroups. His work on trace sets \cite{Kont22}, including an argument of
Bourgain, and Bourgain--Kontorovich \cite{BK18} give quantitative results
for restricted continued fraction semigroups. A large set of traces over
many fields does not by itself give a trace from the unit sequence of one
prescribed field. Here these traces are obtained from explicit identities.

The dimension theory of restricted continued fractions began with Good
\cite{Good41}. Hensley's counting and dimension results
\cite{Hensley89,Hensley92,Hensley96} are used in many quantitative arguments.
Jenkinson--Pollicott \cite{JP01,JP18} give effective dimension methods,
including the rigorous value
\[
 \delta_2=0.531280506277205141624468647\ldots.
\]
Our exponent in Theorem~\ref{thm:fields} is stated using the exact dimension.

Zaremba's conjecture \cite{Zaremba72} asks for a rational continued fraction
with uniformly bounded partial quotients for every denominator.
Niederreiter \cite{Niederreiter86} treated dyadic denominators explicitly.
Bourgain--Kontorovich \cite{BK14} proved the density-one statement with
bound $50$, and Huang \cite{Huang15} reduced the bound to $5$.
Gonz\'alez Robert--Hussain--Shulga \cite{GHR25} study a related
Zaremba question for Hurwitz continued fractions over the Gaussian integers.
Frolenkov--Kan \cite{FK14} and Kan \cite{Kan17} improved
positive-proportion results for general alphabets. Shulga \cite{Shulga23}
gives a bound involving the radical of the denominator. The relevant
expansion and congruence-counting results include Helfgott \cite{Helfgott08},
Bourgain--Gamburd--Sarnak \cite{BGS10,BGS11}, Bourgain--Varj\'u
\cite{BV12}, and Magee--Oh--Winter \cite{MOW19}, with its appendix by
Bourgain, Kontorovich, and Magee.

There are also several recent results and claims. The published 2026 paper
of Moshchevitin--Murphy--Shkredov \cite{MMS26} improves Korobov's bound,
in particular to $O(\log p/\log\log p)$ for prime denominators.
Shkredov's preprint \cite[Corollary 1]{Shk26}, revised in June 2026,
announces an absolute bound for prime and squarefree denominators.
Zhang's preprint \cite{Zhang26}, revised in May 2026, announces
Zaremba's conjecture with a large absolute bound. These statements concern
an unspecified uniform bound. Mercat's implication, also recorded in
\cite[Theorem 3.2.3]{Malagoli17}, takes a Zaremba bound $z$ to a uniform
McMullen bound $z+1$. It does not give the binary bound in
Conjecture~\ref{conj:binary}. The exact formulas proved here do not depend on
these preprint claims.

Kopp--Lagarias \cite{KL26}, in a preprint revised in April 2026, study
class numbers of orders generated by units. The orders in
Theorem~\ref{thm:127} contain $\Z[\eps^{4j}]$ with index $4$.
Rickards--Stange \cite{RS25} prove reciprocity obstructions for certain
semigroup orbits. This concerns the limits of local-global arguments and
does not decide the binary continued fraction question.

\subsection{Plan of the paper}
Section~\ref{sec:prelim} recalls the relation between continued fraction
matrices, multiplier orders, and units. Section~\ref{sec:127} proves
Theorem~\ref{thm:127} and Corollary~\ref{cor:counts127}.
Section~\ref{sec:pal} proves a refinement concerning primitive periods of Mercat's
palindromic construction, using the Fine--Wilf theorem
\cite{FineWilf65,Lothaire02}. Section~\ref{sec:fields} proves
Theorem~\ref{thm:fields}.

\section{Continued fraction matrices and quadratic orders}\label{sec:prelim}
For a digit $a\geq1$, and a finite word $w=a_1\cdots a_r$, put
\[
 B_a=\mat a1 10,\qquad P_w=B_{a_1}\cdots B_{a_r},\qquad P_{\varnothing}=I.
\]
Concatenation is written without commas, $w^n$ means concatenation of $n$
copies, and $\rev(w)$ reverses the word. The monoid generated by the matrices $B_a$ is free:
the matrix determines its complete word, including terminal digits $1$.
This follows from the Euclidean factorization of positive unimodular matrices
\cite[\S2]{Mercat13}; using both columns removes the familiar ambiguity of a
finite scalar continued fraction. Transposition gives
$P_{\rev(w)}=P_w^t$.

If $P_w=\mat ab ce$, its positive fixed point satisfies
\begin{equation}\label{eq:fixedquad}
 c\alpha^2+(e-a)\alpha-b=0,
 \qquad \alpha=\per w,
 \qquad \Q(\alpha)=\Q\bigl(\sqrt{\tr(P_w)^2-4\det(P_w)}\bigr).
\end{equation}
Every such root is reduced: $\alpha>1$ and $-1<\alpha'<0$.
For $M=\mat ab ce\in\mathrm{SL}_2(\Z)$, define
\begin{equation}\label{eq:content}
 g(M)=\gcd(b,c,a-e).
\end{equation}
The primitive quadratic polynomial of its reduced fixed point has discriminant
\begin{equation}\label{eq:primitiveD}
 \Delta(\alpha)=\frac{\tr(M)^2-4}{g(M)^2}.
\end{equation}

Let $K$ have fundamental discriminant $D_K$ and let
$\OO_f=\Z+f\OO_K$ be its order of conductor $f$. With $\eps_K$ as in Section~\ref{subsec:counts}, write
\[
 T_k=\eps_K^k+\eps_K^{-k},\qquad
 \eps_K^k=\frac{T_k+b_k\sqrt{D_K}}2.
\]
Then $T_k,b_k$ are integers, $b_k>0$ for $k\geq1$, and
\begin{equation}\label{eq:firstindex}
 k(f)=\min\{k\geq1:f\mid b_k\},\qquad
 f\mid b_k\ \Longleftrightarrow\ k(f)\mid k.
\end{equation}
For completeness, $k(f)$ is the order of the image of $\eps_K$ in the finite
group
$ (\OO_K/f\OO_K)^\times/(\Z/f\Z)^\times$.

\begin{lemma}[Multiplier order]\label{lem:order}
Let $A_0X^2+B_0X+C_0$ be a primitive polynomial for a reduced quadratic
irrational $\alpha$, with $A_0>0$. The multiplier ring of
$\Z+\Z\alpha$ is
\[
 \Z+A_0\alpha\Z
   =\Z+\frac{-B_0+\sqrt{\Delta(\alpha)}}2\Z.
\]
In particular it is $\OO_f$ if $\Delta(\alpha)=D_Kf^2$.
\end{lemma}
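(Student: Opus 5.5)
The plan is to prove the lemma directly from the definition of the multiplier ring. Put $L=\Z+\Z\alpha$. Since $\alpha$ is irrational, $\{1,\alpha\}$ is a $\Q$-basis of $K$, so every $\beta\in K$ can be written as $\beta=u+v\alpha$ with $u,v\in\Q$. The condition $\beta L\subseteq L$ amounts to two conditions: $\beta\cdot1\in L$ and $\beta\alpha\in L$. The first forces $u,v\in\Z$. For the second, I will use the relation $\alpha^2=(-B_0\alpha-C_0)/A_0$ to expand
\[
 \beta\alpha=u\alpha+v\alpha^2=-\frac{vC_0}{A_0}+\Bigl(u-\frac{vB_0}{A_0}\Bigr)\alpha .
\]
This element lies in $L$ exactly when $A_0\mid vC_0$ and $A_0\mid vB_0$. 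Because the polynomial is primitive, $\gcd(A_0,B_0,C_0)=1$, and the two divisibilities together are equivalent to $A_0\mid v$. Hence the multiplier ring is $\{u+v\alpha:u\in\Z,\ A_0\mid v\}=\Z+A_0\alpha\Z$.

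Next I identify this ring with the displayed form. Since $\alpha$ is the reduced root and $\alpha>\alpha'$, it is the root with the plus sign:
\[
 \alpha=\frac{-B_0+\sqrt{\Delta(\alpha)}}{2A_0},\qquad \Delta(\alpha)=B_0^2-4A_0C_0 .
\]
Therefore $A_0\alpha=(-B_0+\sqrt{\Delta(\alpha)})/2$, which gives the second expression for the ring. This step needs $A_0>0$, so that $\alpha>\alpha'$ selects the positive square root.

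For the final sentence, suppose $\Delta(\alpha)=D_Kf^2$. Let $\omega_K=(D_K+\sqrt{D_K})/2$, so that $\OO_K=\Z+\Z\omega_K$. Then $\OO_f=\Z+f\OO_K=\Z+\Z f\omega_K$, and
\[
 f\omega_K=\frac{fD_K+f\sqrt{D_K}}{2}=\frac{fD_K+\sqrt{\Delta(\alpha)}}{2}.
\]
Consequently
\[
 \frac{-B_0+\sqrt{\Delta(\alpha)}}{2}-f\omega_K=\frac{-B_0-fD_K}{2}.
\]
This difference is an integer because $B_0^2\equiv\Delta(\alpha)=D_Kf^2\equiv f^2D_K^2\pmod 4$ and $D_K\equiv D_K^2\pmod 4$, so $B_0\equiv fD_K\pmod 2$. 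Thus the two generators differ by an element of $\Z$, and $\Z+\tfrac{-B_0+\sqrt{\Delta(\alpha)}}{2}\Z=\OO_f$.

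The only delicate step is the equivalence $A_0\mid vB_0,\ A_0\mid vC_0\iff A_0\mid v$. For $v$ written in lowest terms, $v=p/q$, the condition $u\in\Z$ already forces nothing on $v$ beyond $v\in\Z$ from $\beta\in L$. Once $v$ is an integer, the argument is gcd bookkeeping: if $p$ is a prime with $p^e\| A_0$ and $p^e\nmid v$, then $p$ divides both $B_0$ and $C_0$, contradicting primitivity. The rest of the proof is routine.
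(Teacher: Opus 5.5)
Your proof is correct and is essentially the paper's argument: you write a multiplier as $u+v\alpha$ with $u,v\in\Z$ (from $\beta\cdot1\in L$), reduce $\beta\alpha\in L$ to $A_0\mid vB_0$ and $A_0\mid vC_0$, and use primitivity prime by prime to get $A_0\mid v$. You also spell out the identification with $\OO_f$ through the parity check $B_0\equiv fD_K\pmod 2$, which the paper leaves implicit; the aside about writing $v=p/q$ in your last paragraph is superfluous, since $v\in\Z$ was already established.
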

\begin{proof}
A multiplier belongs to $\Z+\Z\alpha$, since it acts on $1$, so write it
$r+s\alpha$ with $r,s\in\Z$. Multiplication by $\alpha$ preserves the lattice
precisely when $A_0\mid sB_0$ and $A_0\mid sC_0$. Primitivity implies
$A_0\mid s$, prime by prime. The displayed formula and the order discriminant
follow. This is the usual form--order correspondence \cite{Buell89}.
\end{proof}

The counting functions in Section~\ref{subsec:counts} are finite. Equivalently, a root counted by $R_{K,m}(T_k)$ is
fixed by a product of continued fraction matrices of determinant one and trace $T_k$.
This follows from the usual inequalities for a reduced quadratic irrational. If the multiplier is
$\eps_K^k=(t+u\sqrt\Delta)/2$, where $u=b_k/f$, its matrix is
\begin{equation}\label{eq:unitmatrix}
 M=\mat{(t-B_0u)/2}{-C_0u}{A_0u}{(t+B_0u)/2}.
\end{equation}
The reduced-root inequalities and integrality give
\[
 a>\max(b,c),\qquad \min(b,c)\geq e\geq1.
\]
For example,
$c-e=((2A_0-B_0)u-t)/2>-\eps_K^{-k}>-1$, so $c\geq e$;
apply the same argument to $-1/\alpha'$ for the other inequality.
The continued fraction matrix factorization \cite[\S2]{Mercat13} supplies the word.
If $\alpha$ has bounded periodic digits, that factorization follows its period
and hence has the same digit bound. At a fixed trace the displayed cone contains
only finitely many integer matrices.
Equation~\eqref{eq:firstindex} now gives the useful exact identity
\begin{equation}\label{eq:divisoridentity}
 R_{K,m}(T_k)=\sum_{h\mid k}A_{K,m}(h).
\end{equation}

\begin{lemma}[Primitivity and unit group index]\label{lem:primitiveindex}
Suppose a word of even length with positive integer digits $w$ is primitive and $P_w$ has expanding
eigenvalue $\eps_K^k$. Its fixed point has unit group index $k$.
Conversely, if $K$ has no units of norm $-1$ and the fixed point has unit group index $k$, then any word matrix with expanding eigenvalue $\eps_K^k$ is primitive.
\end{lemma}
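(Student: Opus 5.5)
The plan rests on one description of the full stabilizer of the reduced fixed point $\alpha=\per w$. By Lemma~\ref{lem:order} the multiplier order is some $\OO_f$, and the units of $\OO_f$ that fix $\alpha$ with positive multiplier are exactly the totally positive units $\OO_f^{\times,+}=\langle\eps_K^{k(f)}\rangle$. By the matrix formula \eqref{eq:unitmatrix} and the inequalities recorded after it, each such unit $\eps_K^{h}$ with $k(f)\mid h$ is realized by a unique positive matrix in the cone $a>\max(b,c)$, $\min(b,c)\ge e\ge1$. The continued fraction factorization of \cite[\S2]{Mercat13} then writes that matrix as $P_u$ for a word $u$. Because $\alpha$ is purely periodic and reduced, this factorization follows the expansion of $\alpha$. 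Hence $u$ is a power of the least period $p$ of $\alpha$, of even total length, because its determinant is $1$.

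From this I extract the key fact: the stabilizer matrices of determinant one with expanding eigenvalue $>1$ are exactly $P_{q}^n$, $n\ge1$. Here $q=p$ if $|p|$ is even and $q=p^2$ if $|p|$ is odd. Freeness of the monoid gives injectivity: distinct words give distinct matrices, so $P_q$ generates the stabilizer. Its expanding eigenvalue is therefore the generator $\eps_K^{k(f)}$.

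\emph{Forward direction.} Assume $w$ has even length, is primitive, and $P_w$ has eigenvalue $\eps_K^k$. Primitivity means $w$ is its own least period up to rotation, that is, $w=p$. Since $|w|$ is even, $q=w$. The key fact then gives $\eps_K^{k}=\eps_K^{k(f)}$, so $k(f)=k$.

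\emph{Converse.} Assume $K$ has no unit of norm $-1$ and $k(f)=k$. Let $u$ be any word with $P_u$ having eigenvalue $\eps_K^k$. The determinant of $P_u$ is the norm of its eigenvalue, which is $1$, so $|u|$ is even. By the key fact, $u=q^n$ with $\eps_K^{k}=\eps_K^{n k(f)}$, so $n=1$ and $u=q$. It remains to rule out $q=p^2$ with $|p|$ odd. In that case $P_p$ has determinant $-1$, and its expanding eigenvalue $\lambda$ satisfies $\lambda^2=\eps_K^k$. This $\lambda$ is a unit of $\OO_f$ of norm $-1$, contradicting the hypothesis. So $u=p$, which is primitive.

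The main obstacle is making the key fact rigorous, namely that every stabilizing positive matrix is a power of the period's matrix. I would handle it by iterating the Gauss map on $\alpha$. A matrix $P_u$ fixing $\alpha$ forces the digits of $u$, read cyclically, to match the expansion of $\alpha$: each $B_a^{-1}$ must send the current reduced point to another reduced point, so the digits are determined one at a time. This forces $u$ to be a prefix of $p^\infty$, and fixing $\alpha$ forces its length to be a multiple of $|p|$, hence $u=p^n$. Rotation invariance of primitivity makes the choice of starting point irrelevant.
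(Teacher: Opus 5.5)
Your proposal is correct, but it is organized differently from the paper's proof. You first prove a structural fact: every determinant-one positive matrix fixing $\alpha$ is $P_q^n$, where $q$ is the least period (or its square when the least period has odd length). You get this from the unit-matrix formula together with uniqueness of infinite continued fraction expansions, and then read off both directions. The paper never identifies the least period. For the forward direction it takes $h=k(f)$ and lets $V=P_u$ be the continued fraction matrix of $\eps_K^h$ given by the unit-matrix formula and the cone inequalities. It observes that $P_w$ and $V^{k/h}$ have the same eigenlines and eigenvalues, so $P_w=V^{k/h}$, and freeness of the monoid turns this into the word identity $w=u^{k/h}$; primitivity then forces $h=k$. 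For the converse it argues directly. A proper root $w=u^n$ gives $P_u$ fixing the same point, and its expanding eigenvalue is a unit of the multiplier order whose $n$th power is $\eps_K^k$. Its norm cannot be $-1$, so it equals $\eps_K^{k/n}$ with $k(f)\mid k/n<k$, a contradiction. The paper's route is shorter and needs no Gauss-map lemma, since freeness does all the combinatorial work. Yours costs that extra lemma but gives a sharper picture: the generator of $\OO_f^{\times,+}$ is realized exactly by the least period or its square. This also makes clear why the norm $-1$ hypothesis enters the converse, because an odd least period yields a unit of norm $-1$. One small correction: it is not freeness that makes $P_q$ a generator of the stabilizer. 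That follows because every stabilizer element is a power of $P_q$ and $P_q$ itself lies in the stabilizer.
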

\begin{proof}
For the first assertion let $h=k(f)$. Then $h\mid k$, and
\eqref{eq:unitmatrix} gives a continued fraction matrix $V$ for $\eps_K^h$.
Both matrices have the same two eigenlines, so $P_w=V^{k/h}$.
Freeness of this monoid implies that $w$ is a word power if $h<k$.
For the converse, a proper word root would yield an algebraic-integer unit
in the multiplier order with smaller expanding eigenvalue. Its norm cannot
be $-1$ under the stated hypothesis, so it would have an index strictly
between $0$ and $k$, a contradiction.
\end{proof}

\section{Proofs for the family in \texorpdfstring{$\Q(\sqrt{127})$}{Q(sqrt(127))}}\label{sec:127}
\subsection{The trace identity}
Put $A=P_a$, $B=P_b$, $C=P_c$ and use column vectors. Exact multiplication gives
\begin{equation}\label{eq:A127}
 A=\mat{80862222221311}{24069743575040}{29069695697535}{8652991496191}.
\end{equation}
The remaining determinant and trace data are
\[
 B=\begin{pmatrix}9&7\\5&4\end{pmatrix},\qquad
 \det A=\det B=\det C=1.
\]
\[
 t=\tr A=89515213717502=9461248^2-2.
\]
Set
\begin{equation}\label{eq:uv127}
 J=\mat0{-1}10,\qquad v=\binom32,\qquad
 u=\binom{24069743575040}{8652991496191}=Ae_2.
\end{equation}
The two exact rank-one identities are
\begin{equation}\label{eq:rank127}
 B=vv^t-J,\qquad C=uu^t-J.
\end{equation}
For $n=j-1$, let $w=A^nu$. Since $A^nJ(A^t)^n=J$, the family matrix is
\begin{equation}\label{eq:W127}
 W_n=P_{z_j}=BA^nC(A^t)^n=(vv^t-J)(ww^t-J).
\end{equation}
Traces of symmetric--skew products vanish, and $J^2=-I$, so
\begin{equation}\label{eq:trace127first}
 \tr W_n=(v^tw)^2-2.
\end{equation}
The relation $3A_{12}+2A_{22}=\tr A$ says
$v^tAe_2=\tr A$; also $v^te_2=2=\tr I$.
Cayley--Hamilton expresses every power of $A$ in the span of $I,A$.
Consequently
\[
 v^tw=v^tA^{n+1}e_2=\tr A^{n+1}.
\]
It follows from \eqref{eq:trace127first} that
\begin{equation}\label{eq:trace127}
 \det W_n=1,\qquad \tr W_n=\tr A^{2n+2}=\eps^{4j}+\eps^{-4j}.
\end{equation}
In the last equality the expanding eigenvalue of $A$ is $\eps^2$, by its
determinant and trace. Thus the field assertion is already proved.

\subsection{The common divisor of the quadratic coefficients}\label{subsec:content}
We now compute the greatest common divisor of the quadratic coefficients.
\begin{lemma}\label{lem:contentfour}
For every $n\geq0$, the matrix in \eqref{eq:W127} satisfies $g(W_n)=4$.
\end{lemma}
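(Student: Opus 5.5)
We work directly with the rank-one form $W_n=(vv^t-J)(ww^t-J)$ from \eqref{eq:W127}. Expanding gives
\[
 W_n=(v^tw)\,vw^t-vw^tJ-Jww^t-I .
\]
Hence the content $g(W_n)=\gcd(b,c,a-e)$ can be written in terms of the entries $v=(3,2)^t$ and $w=(w_1,w_2)^t=A^nu$.

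The first step is to write the three quantities explicitly. Put $s=v^tw=3w_1+2w_2=\tr A^{n+1}$. We have $vw^tJ=v\,(w_2,-w_1)$ and $Jww^t=(-w_2,w_1)^t w^t$. Reading off the entries gives
\[
\begin{aligned}
 b&=3sw_2+3w_1+w_2w_2,\\
 c&=2sw_1-2w_2-w_1w_1,\\
 a-e&=(3sw_1-3w_2+w_2w_1)-(2sw_2+2w_1-w_1w_2),
\end{aligned}
\]
up to sign conventions, which we will recheck carefully. So $g(W_n)$ is the gcd of three explicit binary quadratic expressions in $(w_1,w_2)$, with $s$ itself linear in $w$.

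The second step is to substitute $s=3w_1+2w_2$. This makes each coefficient a quadratic form in $(w_1,w_2)$ plus a linear term, and we then compute the gcd.

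\textbf{Divisibility by $4$.} We reduce modulo $8$, or modulo $16$ if needed. The vector $w=A^nu$ evolves by the fixed matrix $A$ of \eqref{eq:A127}. Modulo a fixed power of $2$, the orbit $\{A^nu\}$ is eventually periodic, and in fact purely periodic since $A$ is invertible. It therefore suffices to check finitely many residues of $w$ modulo $16$ to see that the $2$-adic valuation of the gcd is exactly $2$. This accounts for the factor $4$. The parities of $u=(24069743575040,8652991496191)$ and of $A$ make this a small finite computation.

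\textbf{Odd part.} This is the main obstacle: we must show that no odd prime $p$ divides all three coefficients for any $n$. Suppose $p$ divides $b$, $c$ and $a-e$. Then $W_n\equiv \tfrac{\tr W_n}{2}I \pmod p$, so $W_n\equiv\pm I \pmod p$ because $\det W_n=1$. We plan to use the structure instead of congruences in $n$. Since $W_n=BA^nC(A^t)^n$, the condition $W_n\equiv\pm I$ means $C\equiv \pm A^{-n}B^{-1}(A^{t})^{-n}\pmod p$.

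Equivalently, the three coefficients $b$, $c$, $a-e$ are the coefficients of the binary quadratic form attached to $W_n$, and by \eqref{eq:primitiveD} their gcd squared divides $\tr(W_n)^2-4=s^2(s^2-4)$. So it suffices to show that any common odd prime divisor $p$ must divide one of $w_1$, $w_2$, $s$ or $s\pm2$, and to reach a contradiction in each case using the explicit formulas above.

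\textbf{Case $p\mid s$.} Then $b\equiv w_2^2+3w_1$ and $c\equiv -w_1^2-2w_2$. Together with $a-e$, this forces $p\mid \gcd(w_1,w_2)$. But $w=A^nu$ is a primitive vector, because $u=Ae_2$ is a column of a unimodular matrix. This is a contradiction.

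\textbf{Case $p\mid s^2-4$.} A similar elimination, via resultants of the two quadratics, leaves only finitely many candidate primes. These are divisors of a fixed integer computable from $v$ and the reduction of the form. For each such prime we check the periodic orbit of $w$ modulo $p$ directly.

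Carrying out this elimination cleanly, so that the argument reduces to finitely many explicit checks, is the step we expect to require the most care.
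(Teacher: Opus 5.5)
There is a genuine gap, and it starts with an expansion error. One has $(vv^t-J)(ww^t-J)=(v^tw)\,vw^t-vv^tJ-Jww^t-I$. The middle term is $vv^tJ$, not $vw^tJ$. With $w=(h,k)^t$ and $s=v^tw=3h+2k$, the correct coefficients are
\[
 b=3sk+k^2+9,\qquad c=2sh-h^2-4,\qquad a-e=s(3h-2k)+2hk-12,
\]
that is, $9hk+7k^2+9$, $5h^2+4hk-4$ and $9h^2+2hk-4k^2-12$. They contain constants, not the linear terms you wrote.

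Your $2$-adic plan survives this correction. Indeed $A\equiv\mat{3}{0}{3}{3}$ and $u\equiv(0,3)^t\pmod 4$ give $4\mid h$ and $k$ odd, and then $c\equiv-4\pmod{16}$. Your odd-prime analysis does not survive. For $p\mid s$ the congruences become $k^2\equiv-9$, $h^2\equiv-4$, $hk\equiv6\pmod p$, i.e.\ $ww^t\equiv-zz^t$ with $z=(2,-3)^t$. This does not force $p\mid\gcd(h,k)$: it is solvable, by $w\equiv\pm iz$ with $i^2\equiv-1$, for every prime $p\equiv1\pmod4$.

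Moreover, this is the \emph{only} case. If $W_n\equiv\pm I$, then $ww^t-J\equiv\pm B^{-1}$. The difference of the off-diagonal entries is $2$ on the left and $\mp2$ on the right, so the sign is $-$, giving $W_n\equiv-I$ and $p\mid s$. Your case $p\mid s^2-4$ is the vacuous one, and the case you dismissed carries all the difficulty.

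The missing idea is how to reduce to finitely many primes. Eliminating $h,k$ by resultants cannot do this, precisely because the local system above is solvable modulo infinitely many $p$. You must use that $w$ lies on the orbit $A^nu$. The paper does this with the $A$-invariant form $Q_0=(JA-A^tJ)/4930$, which satisfies $A^tQ_0A=Q_0$. Multiplying $ww^t\equiv-zz^t$ by $Q_0$ and taking traces gives $u^tQ_0u=w^tQ_0w\equiv-z^tQ_0z$. Hence $p$ divides the fixed integer $H=-2^2\cdot7^2\cdot13\cdot43\cdot557\cdot2053$.

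The primes $7$ and $43$ are then excluded, since $k^2\equiv-9$ is impossible modulo primes $\equiv3\pmod4$. For $p=13,557,2053$ one has $A^{3}\equiv-I$, $A^{139}\equiv-I$ and $A^{513}\equiv-I$ respectively. So $A$ has order twice an odd number modulo $p$, and no power of $A$ has order $4$. But $s=\tr A^{n+1}\equiv0$ would give $(A^{n+1})^2\equiv-I$ by Cayley--Hamilton, an element of order $4$. Your idea of inspecting orbits of $w$ modulo finitely many primes could replace this last step, but only after an orbit invariant such as $Q_0$ has produced the finite list.
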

\begin{proof}
Write $w=(h,k)^t$. Directly from \eqref{eq:W127},
\begin{align}\label{eq:Wentries}
 (W_n)_{12}&=9hk+7k^2+9,\notag\\
 (W_n)_{21}&=5h^2+4hk-4,\\
 (W_n)_{11}-(W_n)_{22}&=9h^2+2hk-4k^2-12.\notag
\end{align}
The residues
\[
 A\equiv\mat3033\pmod4,\qquad u\equiv\binom03\pmod4
\]
show that $4\mid h$ and $k$ is odd for every $n$.
All three expressions in \eqref{eq:Wentries} are divisible by $4$, while
$(W_n)_{21}\equiv-4\pmod{16}$. Thus the $2$-part of $g(W_n)$ is exactly $4$.

Suppose an odd prime $p$ divides $g(W_n)$. Then $W_n$ is scalar modulo $p$,
say $sI$. Its product description gives
$ww^t-J=sB^{-1}$. Comparing the differences of the two off-diagonal entries gives $s=-1$.
With $z=(2,-3)^t$ one has $B^{-1}=zz^t+J$, whence
\begin{equation}\label{eq:oddcontent}
 ww^t=-zz^t\pmod p.
\end{equation}
In particular $k^2=-9\pmod p$ and, since $v^tz=0$,
$v^tw=0\pmod p$.

An invariant integral form restricts the possible primes. Put
\begin{equation}\label{eq:Qform}
 Q_0=\frac{JA-A^tJ}{4930}
  =\mat{-11792979999}{14646902784}{14646902784}{9764601856}.
\end{equation}
One has $A^tQ_0A=Q_0$, either by direct multiplication or by
$A^tJA=J$. Therefore $w^tQ_0w=u^tQ_0u$.
Taking the trace after multiplying \eqref{eq:oddcontent} by $Q_0$ shows that
$p$ divides the following fixed integer:
\begin{align}\label{eq:Hfactor}
 H&=u^tQ_0u+z^tQ_0z=-125288734844\notag\\
  &=-2^2\cdot7^2\cdot13\cdot43\cdot557\cdot2053.
\end{align}
The primes $7$ and $43$ are excluded by $k^2=-9$, since they are $3$ modulo $4$.
For the three remaining primes, direct exponentiation gives:
\begin{equation}\label{eq:oddorders}
 \begin{aligned}
 A^3&=-I\pmod{13},\qquad A^{139}=-I\pmod{557},\\
 A^{513}&=-I\pmod{2053}.
 \end{aligned}
\end{equation}
Thus the order of $A$ modulo each such prime divides twice an odd integer.
But $v^tw=\tr A^{n+1}=0\pmod p$ would imply
$(A^{n+1})^2=-I$ by Cayley--Hamilton. The element $A^{n+1}$ would have order
$4$, which is impossible. No odd prime divides $g(W_n)$.
Each equality in \eqref{eq:oddorders} can be checked by repeated squaring.
For convenience, the residues of $A$ are listed in Table~\ref{tab:modular}.
\end{proof}

\begin{table}[ht]
\centering
\caption{Congruences for the remaining possible odd prime divisors of $g(W_n)$.}
\label{tab:modular}
\begin{tabular}{@{}rccc@{}}
\toprule
$p$ & $A\bmod p$ & odd $h$ & $A^h\bmod p$\\
\midrule
13 & $\begin{psmallmatrix}11&7\\12&3\end{psmallmatrix}$ & 3 & $-I$\\[3pt]
557 & $\begin{psmallmatrix}165&148\\261&278\end{psmallmatrix}$ & 139 & $-I$\\[3pt]
2053 & $\begin{psmallmatrix}521&517\\1019&1023\end{psmallmatrix}$ & 513 & $-I$\\
\bottomrule
\end{tabular}
\end{table}

\subsection{The order and the least period}
\begin{proof}[Proof of Theorem~\ref{thm:127}]
The words have lengths $|a|=46$, $|b|=4$, $|c|=90$, so $|z_j|=92j+2$.
By \eqref{eq:trace127}, the trace discriminant is
$508y_{4j}^2$. Lemma~\ref{lem:contentfour}, \eqref{eq:primitiveD}, and
Lemma~\ref{lem:order} give conductor $f_j=y_{4j}/4$ and
\eqref{eq:fj}. Integrality also follows from $4\mid y_4$ and the formula for
powers of $\eps^4$.

The coefficient sequence satisfies
\[
 y_0=0,\quad y_1=419775,\quad
 y_{r+1}=9461248\,y_r-y_{r-1}.
\]
It is strictly increasing, and for $r\geq2$,
\begin{equation}\label{eq:ygap}
 y_r>(9461248-1)y_{r-1}>4y_{r-1}.
\end{equation}
Hence $f_j=y_{4j}/4>y_{4j-1}$. No positive $y_r$ with $r<4j$ can be a
multiple of $f_j$, whereas $f_j\mid y_{4j}$. The unit group index is therefore
exactly $4j$. Since the field has no units of norm $-1$,
Lemma~\ref{lem:primitiveindex} proves that $z_j$ is primitive. This gives the
claimed least period, not merely a displayed period.

Finally $\Z[\eps^{4j}]=\Z+y_{4j}\sqrt{127}\Z=\OO_{y_{4j}}$.
The ratio of conductors gives \eqref{eq:indexfour}. The conductors increase
strictly with $j$, so the cycles are distinct.
\end{proof}

\begin{remark}
Primitivity can also be checked combinatorially. The word has $2j$ occurrences
of $4$ and length $92j+2$, whose greatest common divisor is $2$.
Only a square could be a proper word power. For $j\geq3$ the overlap of its
two long blocks would force $\rev(a)$ to be the left cyclic shift of $a$ by
$|a|-3$. Palindromicity of $P$ makes it the shift by $|a|-1$, forcing period
$2$ in $a$, which contains four distinct digits. For $j=1,2$, the digit at
zero-based index $1$ differs from that at the corresponding index $48,94$
in the other half. This is an independent elementary check.
\end{remark}

\begin{example}\label{ex:first}
For $j=1$, the word $z_1=bc$ has primitive length $94$. Its matrix has trace
\[
 T_4=8012973486890058230661120002,
\]
and its multiplier order has conductor
\[
 f_1=88879552292132849341689600.
\]
Thus this single primitive period gives $94$ distinct reduced quadratic
irrationals of discriminant $508f_1^2$ in the same field.
\end{example}

\subsection{Proof of the counting bounds}
\begin{proof}[Proof of Corollary~\ref{cor:counts127}]
Each primitive word $z_j$ has $92j+2$ distinct cyclic rotations. Their
continued fractions are distinct by uniqueness of infinite irrational
continued fractions. Rotation conjugates the matrix by a unimodular digit
matrix, so it preserves $g(M)$ and the trace, or equivalently preserves the
multiplier order. The first inequality follows. Formula
\eqref{eq:divisoridentity}, summed over $j\mid r$, gives the second.

The discriminant in \eqref{eq:fj} is
$\tfrac14\sinh^2(4j\log\eps)$, so $j\leq J(X)$ is exactly the condition
that it be at most $X$. Different $j$ give different orders and therefore
disjoint sets of roots. Summing $92j+2$ for $1\leq j\leq J(X)$ gives
\eqref{eq:cumulative}. Finally
$J(X)=\log X/(8\log\eps)+O(1)$, proving \eqref{eq:liminf}.
\end{proof}
The bounds count distinct reduced quadratic irrationals. The first inequality
counts those whose orders have unit group index $4r$. The second also counts
the orders whose indices divide $4r$.

\section{Primitive periods from palindromic words}\label{sec:pal}
We next give a general quantitative refinement of Mercat's
Proposition 7.7 \cite{Mercat13}. This section is also the constructive input
for counting fields.
\begin{theorem}\label{thm:pal}
Let $m\geq2$ and let $s$ be any palindrome on $\{1,\ldots,m\}$, including the
empty word. Set
\begin{equation}\label{eq:generalwords}
 a=s11211,\quad b=2111,\quad c=s121111s,\quad
 w_n=b\,a^nc\,\rev(a)^n,\quad L=|s|+5.
\end{equation}
Let $A=P_a$ and let $\lambda>1$ be its expanding eigenvalue. Put
\[
 K=\Q\bigl(\sqrt{\tr(A)^2-4\det A}\bigr).
\]
Then every $\per{w_n}$ belongs to
$\BB_m(K)$. For every $n\geq5$, the word $w_n$ is primitive of length
$2(n+1)L$. If $\lambda^2=\eps_K^q$, then $q\geq1$ is an integer and
\begin{equation}\label{eq:generalcounts}
 A_{K,m}(qj)\geq2jL\quad(j\geq6),\qquad
 R_{K,m}(T_{qr})\geq2L\sum_{\substack{j\mid r\\j\geq6}}j\quad(r\geq1).
\end{equation}
\end{theorem}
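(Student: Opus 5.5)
Our plan is to copy the structure of Section~\ref{sec:127}, replacing the exact numerical computations there by identities valid for every palindrome $s$.

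\textbf{Rank-one identities and the trace.} Since $s$ is a palindrome, $P_s$ is symmetric. We first verify
\[
 B=P_{2111}=vv^t-J,\qquad C=P_{s121111s}=uu^t-J,
\]
for a suitable fixed vector $v$ and $u=Ae_2$ (or another column of $A$). We expect this to reduce to a $2\times2$ identity for $P_{121111}$ conjugated by the symmetric matrix $P_s$; this is Mercat's Proposition~7.7, which we may take as the source. We also need $v^tAe_2=\tr A$ and $v^te_2=2$. Then the argument leading to \eqref{eq:trace127} gives
\[
 \det P_{w_n}=1,\qquad \tr P_{w_n}=(\tr A^{n+1})^2-2=\tr A^{2n+2}=\lambda^{2n+2}+\lambda^{-2n-2},
\]
using $\det A=1$, which holds because $|a|=|s|+5$ is odd$+$even as needed; we will check the parity. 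Hence the expanding eigenvalue of $P_{w_n}$ is $\lambda^{2(n+1)}$. The discriminant of $\tr(P_{w_n})^2-4$ is a square times $\tr(A)^2-4$, so $\per{w_n}\in\BB_m(K)$. The length is $|b|+2n|a|+|c|=4+2nL+2|s|+6=2(n+1)L$.

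\textbf{Primitivity.} In Section~\ref{sec:127} primitivity came from an exact computation of $g$, which is unavailable for general $s$. So we argue combinatorially, as in the Remark, via Fine--Wilf. Suppose $w_n=x^e$ with $e\geq2$. Then $w_n$ has period $|w_n|/e\leq(n+1)L$. The word contains the block $a^n$, of length $nL$, with period $L$. For $n\geq5$ the overlap of the shifted copy with this block has length at least $L+|w_n|/e-\gcd$. Fine--Wilf then forces the period $|w_n|/e$ to be compatible with $L$ on a long window, so that $\rev(a)^n$ aligns with a cyclic shift of $a^n$.

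We then use the explicit tail $11211$ against $11211$ reversed, together with the marker $b=2111$ and the centre $121111$. These show that the only occurrences of the factor $2111$, or of the pattern around the unique ``$1111$'', are incompatible with a nontrivial shift. This should yield a contradiction.

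This is the main obstacle: the case analysis of how the factors $1111$ (inside $c$) and $2111$ (from $b$) sit inside $a^n$, $\rev(a)^n$ and $s$, since $s$ itself may contain such factors. The threshold $n\geq5$ should provide enough room for Fine--Wilf to apply.

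\textbf{Orders and counts.} The eigenvalue $\lambda^2$ of $A^2$ is a totally positive unit of norm $1$ in some order of $K$, hence a power $\eps_K^q$ with $q\geq1$ an integer. By Lemma~\ref{lem:primitiveindex}, applied to the primitive even-length word $w_n$ with expanding eigenvalue $\eps_K^{q(n+1)}$, the fixed point has unit group index $q(n+1)$. Taking $j=n+1\geq6$, its $2jL$ cyclic rotations are distinct roots with the same multiplier order, as in the proof of Corollary~\ref{cor:counts127}. This gives $A_{K,m}(qj)\geq2jL$. Summing over divisors through \eqref{eq:divisoridentity}, and using $qj\mid qr$ when $j\mid r$, gives the second bound in \eqref{eq:generalcounts}.
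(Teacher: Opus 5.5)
Your length count and your deduction of the bounds from Lemma~\ref{lem:primitiveindex} and \eqref{eq:divisoridentity} agree with the paper. The gap is primitivity, which is the new content of the theorem: you do not prove it, and the mechanism you sketch would not give it. Fine--Wilf cannot be applied uniformly in $e$. For $e=2$ the period $(n+1)L$ exceeds the length $nL$ of $a^n$, so that block tells you nothing. Aligning $\rev(a)^n$ with some cyclic shift of $a^n$ is not a contradiction by itself, because $\rev(a)=11211s$ really is a cyclic shift of $a=s11211$. Tracking occurrences of $2111$ or $1111$ cannot work for arbitrary $s$.

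The missing idea is to split on the least period $p$ of the repetition of $w_n$; note that $p\mid N=2(n+1)L$.
\begin{itemize}
\item If $p\le(n-1)L$, then $a^n$ has periods $p$ and $L$ and length at least $p+L$. By Fine--Wilf, the periodic extension of $a$ has period $p$. Push each letter of the prefix $b$ forward by a multiple of $p$ into $a^n$, then back within that extension. This forces $b$ to equal the four letters preceding $a$, namely $1211$, contradicting $b=2111$.
\item Since $n\ge5$ gives $N/3\le(n-1)L$, only $p=(n+1)L$ remains. Shift the block $a^n$, which starts after $|b|=4$ letters, by $(n+1)L$. Because $|c|=2L-4$, it overlaps $\rev(a)^n$ in $(n-1)L+4\ge L$ letters. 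This forces $\rev(a)$ to be the left cyclic shift of $a$ by $L-4$. But palindromicity of $s$ and $11211$ makes $\rev(a)=11211s$ the shift by $L-5$. Hence $a$ is invariant under a shift by one, so it is constant, which is absurd.
\end{itemize}
Nothing about the letters of $s$ is needed.

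The trace sketch is also wrong in its details. This matters because the counts need the exact expanding eigenvalue $\lambda^{2(n+1)}$, not only field membership from Mercat. Here $\det A=(-1)^{|s|+5}=-\det P_s$, which equals $-1$ whenever $|s|$ is even, e.g.\ for $s=\varnothing$. In that case your formula $(\tr A^{n+1})^2-2$ fails. For $s=\varnothing$ and $n=0$, $P_{w_0}=P_{2111}P_{121111}$ has trace $258=\tr A^2$, not $16^2-2=254$. The correct identity is $\tr P_{w_n}=(\tr A^{n+1})^2-2\det A^{n+1}=\tr A^{2n+2}$. It comes from $P_{2111}=2vv^t-J$, $P_{121111}=2uu^t+J$ and $P_{11211}=2uv^t-J$, with $v=(2,1)^t$ and $u=(3,2)^t$. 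These combine with $P_sJP_s=(\det P_s)J$ and the symmetry of $A^nP_s$. Your forms do not hold for this construction. $B=vv^t-J$ has no integral solution $v$, since $B+J$ has corner entry $8$. Nor does $C=uu^t-J$ hold with $u=Ae_2$.
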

\begin{proof}
Write $S=P_s=S^t$ and $\sigma=\det S\in\{1,-1\}$.
The fixed matrices are
\[
 R=P_{11211}=\mat{12}774,\quad
 B=P_{2111}=\mat8532,\quad
 D=P_{121111}=\mat{18}{11}{13}8.
\]
Thus $A=SR$, $C=SDS$, and $P_{w_n}=BA^nC(A^t)^n$.
With $v=(2,1)^t$, $u=(3,2)^t$ and $J$ as above,
\[
 B=2vv^t-J,\quad D=2uu^t+J,\quad R=2uv^t-J.
\]
For any matrix $X$, symmetric--skew cancellation gives
\begin{equation}\label{eq:generaltrace1}
 \tr(BXSDSX^t)=4(v^tXSu)^2+2\sigma\det X.
\end{equation}
Indeed $SJS=\sigma J$ and $XJX^t=\det(X)J$.
The matrix $A^nS$ is symmetric, since it is the alternating product
$SRS\cdots RS$. Consequently
$\tr A^{n+1}=\tr(A^nSR)=2v^tA^nSu$.
Since $\det A=-\sigma$, \eqref{eq:generaltrace1} yields
\begin{equation}\label{eq:generaltrace2}
 \det P_{w_n}=1,\qquad
 \tr P_{w_n}=(\tr A^{n+1})^2-2\det(A^{n+1})=\tr A^{2n+2}.
\end{equation}
The expanding eigenvalue is $\lambda^{2n+2}$, so the quadratic field is $K$.
Also $\lambda^2$ is a totally positive algebraic-integer unit, establishing
$\lambda^2=\eps_K^q$ for an integer $q\geq1$.

We prove primitivity explicitly. Let $p$ be the least period of the infinite
repetition of $w_n$; it divides $N=2(n+1)L$.
If $p\leq(n-1)L$, the block $a^n$ has periods $p,L$ and length at least
$p+L$. The Fine--Wilf theorem \cite{FineWilf65}, in the standard word
form \cite{Lothaire02}, gives period $\gcd(p,L)$. Since this divides $L$,
the infinite repetition of $a$ also has period $p$.
Shift each position of the initial block $b$ forward by an appropriate
multiple of $p$ into $a^n$. Periodicity forces $b$ to be the four digits
immediately preceding $a$ in its periodic repetition. But the last four digits
of $a$ are $1211$, whereas $b=2111$. Therefore $p>(n-1)L$.

For $n\geq5$, a proper repetition of exponent at least three would give
$p\leq N/3\leq(n-1)L$. Only a square remains.
If $p=N/2=(n+1)L$, shift the first long block by this half-length. Its
overlap with the reversed long block has length $(n-1)L+4\geq L$.
It forces $\rev(a)$ to be the left cyclic shift of $a$ by $L-4$.
But $s$ and $11211$ are palindromes, so $\rev(a)=11211s$ is its shift
by $L-5$. The shifts differ by one, making $a$ constant, contrary to the
digits $1,2$ in its terminal block. This proves primitivity.

The $2jL$ cyclic rotations give distinct roots. Their expanding eigenvalue is
$\eps_K^{qj}$, and Lemma~\ref{lem:primitiveindex} gives unit group index $qj$.
Equation~\eqref{eq:divisoridentity} proves the second inequality.
\end{proof}

\section{Proof of the field-counting theorem}\label{sec:fields}
\begin{proof}[Proof of Theorem~\ref{thm:fields}]
Use the sum of matrix entries as norm. Hensley's theorem
\cite{Hensley89}, also stated in \cite[Theorem 1.7]{Kont22}, gives
\begin{equation}\label{eq:hensleycount}
 \#\{P_v:\ v\text{ on }\{1,\ldots,m\},\ \|P_v\|\leq Y\}
 \asymp_m Y^{2\delta_m}.
\end{equation}
An equivalent matrix norm only changes constants. Passing between even word
lengths and all word lengths also affects only constants, by appending one
fixed digit when necessary.

The reductions of $B_1,B_2$ generate all six elements of
$\mathrm{GL}_2(\F_2)$, represented by
$\varnothing,1,11,2,12,21$. Append to each word a suffix from this list
so that its matrix $V$ satisfies
\[
 V\equiv L_0:=\mat1011\pmod2.
\]
The suffix has norm at most $7$, so $\|V\|\leq7Y$.
Any output has at most six inputs: for each chosen suffix, right inversion
recovers the input. We retain $\gg_m Y^{2\delta_m}$ distinct matrices $V$.
If $v$ is the word of $V$, put
\begin{equation}\label{eq:countmatrices}
 s=v1\rev(v),\qquad S=VB_1V^t,\qquad A=SR,
 \qquad R=P_{11211}.
\end{equation}
Freeness of the monoid makes the palindromes $s$, hence the matrices $S$,
distinct. They satisfy
$\det S=-1$ and $S\equiv I\pmod2$, since $L_0B_1L_0^t=I$.
The matrices $A$ therefore have determinant one.
Their traces $T$ are multiples of four, and
\begin{equation}\label{eq:tracebound}
 T\leq\|S\|\|R\|\leq(7Y)^2\cdot3\cdot30=4410Y^2.
\end{equation}
Thus there are $\gg_m N^{\delta_m}$ distinct matrices of trace at most $N$.

We bound collisions at a fixed trace. Write $S=\mat ab bc$;
then $ac-b^2=-1$ and $T=12a+14b+4c$. Set
\begin{equation}\label{eq:UV}
 U=T-3a-2b>0,\qquad V_1=T+4a+2b>0.
\end{equation}
Direct expansion gives
\[
 UV_1=T^2+4(ac-b^2)=T^2-4.
\]
Conversely a divisor $U$ of $T^2-4$, and hence $V_1=(T^2-4)/U$, determines
the entire matrix $S$:
\begin{equation}\label{eq:recover}
 a=U+V_1-2T,\quad b=\frac{7T-4U-3V_1}{2},\quad
 c=\frac{16U+9V_1-24T}{4}.
\end{equation}
The set of matrices $A$ of a given trace has size at most $\tau(T^2-4)$.
The elementary divisor bound gives, for every $\rho>0$, a uniform upper
bound $O_\rho(N^\rho)$ for these sets when $T\leq N$.
Hence these matrices give $\gg_{m,\rho}N^{\delta_m-\rho}$ distinct traces.

Since $4\mid T$, the number $(T/2)^2-1$ is odd and is $3$ modulo $4$.
Write it $d u^2$ with $d$ squarefree. Then $u$ is odd,
$d\equiv3\pmod4$, and $d<T^2/4$.
Within any one quadratic field, each expanding eigenvalue is a totally positive
unit $\eps_K^r$. Such a unit has integral trace at least $3$, so uniformly
\[
 \eps_K\geq(3+\sqrt5)/2.
\]
The condition $\tr(\eps_K^r)\leq N$ permits at most $O(\log N)$ indices,
uniformly in $K$. Passing from distinct traces to distinct fields loses only
this logarithmic factor.
Every field obtained in this construction satisfies the infinitude conclusion of
Theorem~\ref{thm:pal}. Taking $N$ of order $\sqrt X$ and absorbing
$N^\rho\log N$ into $X^\eta$ proves \eqref{eq:fieldcount}.
\end{proof}

\begin{remark}
The congruence restriction is substantive: a squarefree integer $3$ modulo
$4$ has a prime factor $3$ modulo $4$, so its field has no units of norm $-1$. The construction therefore supplies quantitative binary infinitude
even within this class. It applies unchanged to $m=3$ and every larger
fixed alphabet, but does not prescribe an individual radicand.
\end{remark}

\section{Further questions}
The orders in Theorem~\ref{thm:127} are determined exactly. A natural next
question is whether a comparable family in $\Q(\sqrt{127})$ can be obtained
with partial quotients at most $3$, or only $1,2$. For the general construction,
one may ask whether suitable palindromic words can be found in every
prescribed field for a small fixed bound. Theorem~\ref{thm:fields} counts
fields but does not answer this question for a chosen radicand.

\section*{Declaration of generative AI and AI-assisted technologies}
ChatGPT (OpenAI) GPT 6.1 Sol was used to assist in
arithmetic verification, organization, and language of the manuscript.
The arguments and the numerical identities used in the proofs are stated in
full in the article. Responsibility for the submitted work rests with the
human authors.

\bibliographystyle{elsarticle-num}
\bibliography{references}
\end{document}